\definecolor{darkblue}{rgb}{0.0, 0.0, 0.55}
\newcommand\N{\mathbb N}
\newcommand\R{\mathbb R}
\newcommand\C{\mathbb C}
 \renewcommand\t{\mathbf{t}}
 \newcommand\x{\mathbf{x}}
 \newcommand\y{\mathbf{y}}
 \newcommand\blambda{{\bm \lambda}}
\newcommand\al\alpha
\newcommand\be\beta
\newcommand\de\delta
\newcommand\om\omega
\newcommand\ep\varepsilon
\newcommand\la\lambda
\newcommand\La\Lambda
\newcommand\De\Delta
\newcommand\Ph\Phi
\newcommand\Ps\Psi
\newcommand\ph\varphi
\newcommand\ps\psi
\newcommand\ze\zeta
\newcommand\rh\varrho
\newcommand\Om\Omega
\DeclareMathOperator\ev{ev}
\theoremstyle{definition}
\newtheorem{prop}{Proposition}
\newtheorem{thm}[prop]{Theorem}
\newtheorem{cor}[prop]{Corollary}
\newtheorem{lem}[prop]{Lemma}
\newtheorem{df}[prop]{Definition}
\newtheorem{rem}[prop]{Remark}
\newtheorem{remark}[prop]{Remark}
\newtheorem{ex}[prop]{Example}
\newtheorem{problem}[prop]{Problem}
\newtheorem*{problem*}{Problem}
\title{Generalized eigenvalue methods for Gaussian quadrature rules}
\author[G. Blekherman]{Grigoriy Blekherman}
\address{School of Mathematics, Georgia Institute of Technology, 686 Cherry Street,
Atlanta, GA 30332-0160, USA}
\email{greg@math.gatech.edu}
\author[M. Kummer]{Mario Kummer}
\address{Technische Universit\"at Berlin, Institut f\"ur Mathematik, Sekretariat MA 3-2, Stra\ss{}e des 17. Juni 136, 10623 Berlin, Germany}
\email{kummer@tu-berlin.de}
\author[C. Riener]{Cordian Riener}
\address{Department of Mathematics and Statistics, Faculty of Science and Technology, Uit -- The Arctic University of Norway, 9037 Tromsø, Norway}
\email{cordian.riener@uit.no}
\author[M. Schweighofer]{Markus Schweighofer}
\address{Fachbereich Mathematik und Statistik, Universität Konstanz, 78457 Konstanz, Germany}
\email{markus.schweighofer@uni-konstanz.de}
\author[C. Vinzant]{Cynthia Vinzant}
\address{Department of Mathematics, North Carolina State University, Box 8205, NC State University 
Raleigh, NC 27695-8205, USA}
\email{clvinzan@ncsu.edu}
\subjclass[2010]{Primary 65D32; Secondary 14H50, 14P05, 15A22}
\date{\today}
\keywords{quadrature, Gaussian quadrature, plane curves}
\begin{document}
\begin{abstract}
A quadrature rule of a measure $\mu$ on the real line represents 
a convex combination of finitely many evaluations at points, called nodes, 
that agrees with integration against $\mu$ 
for all polynomials up to some fixed degree.  In this paper, we present a bivariate polynomial 
whose roots parametrize the nodes of minimal quadrature rules for 
measures on the real line. We give two symmetric determinantal formulas for 
this polynomial, which translate the problem of finding the nodes to 
solving a generalized eigenvalue problem.  
\end{abstract}
\maketitle
%\tableofcontents

%********************************************************************************
% Section: Introduction
%********************************************************************************

\section{Introduction}\label{intro}

Given a (positive Borel) measure $\mu$ on $\R$, a classical problem in numerical analysis is to 
approximate the integral with respect to $\mu$ of a suitably well-behaved function $f$. One approach 
is via so called \textit{quadrature rules}. 
These approximate the integral by a weighted sum of function values at specified points.  One classical construction for quadrature rules designed to approximate the integral of continuous functions 
consists of demanding  an exact evaluation of the integral for all polynomials of degree $\leq D$.
If the moments of $\mu$ exist and are finite, then this amounts to finding a measure supported on finitely-many points whose moments agree with those of $\mu$ up to degree $D$. 

We use $\t$ as a formal variable on the real line and 
and write $\R[\t]_{\leq D}$ for the vector space of real polynomials of degree at most $D$.
For $k \in \N_0$, we denote the $k$th moment of $\mu$, if it exists and is finite, by  
\[m_k  \ \ = \ \  \int \t^k \  d\mu.  \]

\begin{df}\label{dfqr}
Suppose $D$ is a positive integer and $\mu$ is a measure on $\R$ whose moments up to degree $D$ exist and are finite.
For $x \in \R \cup \{\infty\}$, define the linear function \[\ev_x:\R[\t]_{\leq D} \rightarrow \R\] as follows. 
For $f = \sum_{k=0}^Df_k \t^k \in \R[\t]_{\leq D}$ with $f_0,\ldots,f_D\in\R,$
\[
\ev_x(f) = f(x) \    \text{ for $x\in \R$ \ \ \ \ and }  \ \ \ \  \ev_{\infty}(f) = f_D.
\]
We sometimes write $\ev_{\infty}^D$ for $\ev_{\infty}$ to emphasize its dependence on $D$. 
A \textbf{quadrature rule} of degree $D$ for $\mu$ is a finite set $N\subset \R\cup\{\infty\}$ together with a 
function $w\colon N\to\R_{>0}$ with 
\[\int f \ d \mu \  = \ \sum_{x\in N}w(x) \ev_x(f) \ \ \ \   \text{ for all }\ \ \ \ f\in\R[\t]_{\leq D}.  \]
We call $N$ the \textbf{nodes} of the quadrature rule.
\end{df}
\begin{rem}
In Definition \ref{dfqr} we allow nodes at infinity. This is not desirable for application in numerical analysis, since one cannot generally evaluate functions at these nodes. However, compactifying the real line  by adding $\infty$ makes certain arguments easier. Although our statements are phrased with this more general notion of quadrature, our main result presented below actually provides a tool to explicitly distinguish the cases of quadrature rules with all nodes real
from the case where one of the nodes is $\infty$. Furthermore, it is a classical theorem that quadrature rules for $\mu$ with no nodes at infinity exist for every degree $D$ provided the moments of $\mu$ exist and are finite up to the same degree
(see e.g. \cite[Theorem 5.8]{la1}, \cite[Theorem 5.9]{la2}, \cite[Theorem 1.24]{sch}).
\end{rem}
We say the measure $\mu$ is \textbf{non-degenerate} in degree $d$ if its moments $m_k$ are 
finite up to degree $k=2d$ and for every nonzero, nonnegative polynomial 
$ f\in \R[\t]_{\leq 2d}$ we have $\int f \ d\mu >0$.  Since in one variable any nonnegative polynomial $f$ is sum of squares of polynomials,
 this is equivalent to demanding that $\int p^2 \ d\mu >0$, for every $0\neq p\in \R[\t]_{\leq d}$. This property can be checked quite conveniently in the following way.
 
 \begin{df}  \label{def:Mmatrix}
Consider the quadratic form $p\in \R[\t]\mapsto\int p^2 d\mu$ and restrict it to  $\R[\t]_{\leq d}$.
With respect to the monomial basis $1,\t,\hdots, \t^d$ for $\R[\t]_{\leq d}$, 
this quadratic form  is represented by the $(d+1)\times (d+1)$ 
Hankel matrix with $(i,j)$th entry $m_{i+j-2}$:
\[M_d \ \ = \ \
\begin{pmatrix}
m_0 		& m_1 	& m_2 	& \hdots 		&m_d 	\\
m_1 		& m_2 	& \iddots	&			&m_{d+1} \\
m_2 		& \iddots	& 		&\iddots		&\vdots	\\
\vdots	&		&\iddots	&			&m_{2d-1}\\
m_d 		&m_{d+1}	&\hdots	&m_{2d-1}	&m_{2d}
\end{pmatrix}.\]
With this notation, a measure possessing finite moments up to degree $2d$ is non-degenerate in degree $d$ if and only if 
$\det(M_d)\neq 0$.
\end{df}

From the point of view of numerical analysis it is desirable to have a quadrature rule 
that is exact up to a certain degree and requires the fewest number of evaluations possible.
Such a quadrature rule is called a \textit{Gaussian quadrature rule} for $\mu$. 

It is known that when $D=2d+1$ is odd and $\mu$ is non-degenerate in degree $d$, there is a unique quadrature rule for $\mu$ 
with $d+1$ nodes, and none with fewer nodes \cite[Cor.~9.8]{sch}.   
The nodes can be found as follows.  Let $M'_d$ denote the $(d+1)\times(d+1)$ matrix 
representing the quadratic form $p\mapsto \int \t \cdot p^2 d \mu$ with respect 
to the monomial basis $1,\t,\hdots, \t^d$ of $\R[\t]_{\leq d}$, meaning that $(M'_d)_{i,j}$ equals $m_{i+j-1}$. Then the $d+1$ nodes of the unique Gaussian quadrature rule 
for $\mu$ in degree $2d+1$ are the $d+1$ roots of the univariate polynomial $\det(\x M_d - M_d') $, see \textit{e.g.}~\cite[form. 2.2.9, p. 27]{sze}.
Since $M_d$ and $M'_d$ are real symmetric matrices and $M_d$ is positive definite, 
this writes the problem of finding these $d+1$ nodes as a 
\emph{generalized eigenvalue problem}.

In this paper we focus on the case when $D=2d$ is even. In this case, there is a one-parameter family of quadrature rules for $\mu$  with $d+1$ nodes (see \textit{e.g.}~\cite[Thm 9.7]{sch}). Here we reprove this fact by constructing a polynomial $F\in \R[\x,\y]$ with degree $d$ in each of $\x$ and $\y$ and the property 
that for every $y\in \R$, the $d$ roots of $F(\x,y)\in \R[\x]_{ \leq d}$ are the other $d$ nodes (among them possibly $\infty$)  of this unique quadrature rule
with $y$ as a node. 

Furthermore, we give symmetric determinantal representations of $F$, which again translates the problem of 
finding nodes of a quadrature rule into finding the generalized eigenvalues of a real symmetric matrix, 
\textit{i.e.} solving $\det(\x A-B)=0$ where $A,B$ are real symmetric matrices and $A$ is positive semidefinite 
(see \textit{e.g.}~\cite[Chapter 4]{eig}).
While the literature on Gaussian quadrature rules is vast, to our knowledge these 
formulas are new.

To construct $F$ and its determinantal representations, we consider three quadratic forms on $\R[\t]_{\leq d-1}$, 
namely those taking $p\in \R[\t]_{\leq d-1}$ to $\int p^2d\mu$,  $\int \t \cdot p^2d\mu$, and  $\int \t^2 \cdot p^2d\mu$, respectively. 
We call $M_{d-1}$, $M'_{d-1}$ and $M''_{d-1}$ the matrices representing these quadratic forms with respect to the
basis $1,\t,\hdots, \t^{d-1}$. That is, define the $d\times d$ matrices
\begin{equation}\label{eq:matrices}
M_{d-1}\ = \ (m_{i+j-2})_{1\leq i,j \leq d}, \ \ \ M'_{d-1}  \ = \ (m_{i+j-1})_{1\leq i,j \leq d}, \ \ \text{ and } \ \ M''_{d-1} \ = \  (m_{i+j})_{1\leq i,j \leq d}.
\end{equation}

\begin{thm}\label{thm:main}
Let $\mu$ be a measure on $\R$ that is non-degenerate in degree $d\geq1$.
There is a unique (up to scaling) polynomial $F\in \R[\x,\y]$ of degree $2d$ with the property that for $x,y\in \R$, $F(x,y)=0$ 
if and only if there is a Gaussian quadrature rule for $\mu$ of degree $2d$ with nodes 
$x=r_1,y=r_2,r_3, \hdots, r_{d+1} $ in $ \R\cup \{\infty\}$. 
This polynomial has the following two determinantal representations:
\begin{itemize}
\item[(a)] the determinant of the $d\times d$ matrix with bilinear entries in $\x,\y$,
\[ 
F  \ \ = \ \ \det( \x\y  M_{d-1} - (\x+\y)M'_{d-1} + M''_{d-1}),
\]
\item[(b)] the determinant of the $(2d+1)\times (2d+1)$ matrix with linear entries in $\x,\y$,
\[(\x-\y) \cdot F \ \ = \ \
c\cdot \det\begin{pmatrix}
       \frac{\det M_{d-1}}{\det M_d} (\x-\y) &e_d^T&e_d^T\\
   e_d&\x M_{d-1}-M_{d-1}'&0\\
   e_d&0&-\y M_{d-1}+M_{d-1}'
      \end{pmatrix}
\]
where $e_d=(0,\ldots,0,1)^T\in\mathbb{R}^d$ and $c=(-1)^d \det(M_d)/\det(M_{d-1})^2$.
\end{itemize}
Moreover, for all $x\in \R$, with $\det(x M_{d-1} -M_{d-1}')\neq 0$, all nodes are on the real line, i.e., the associated quadrature rule has no nodes at infinity.
\end{thm}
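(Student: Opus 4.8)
The plan is to detect a node at infinity by reading off the behaviour of $F$ as $\x\to\infty$. Fix a real node $x$ and consider the univariate specialization $F(\x,x)\in\R[\x]_{\le d}$. By the parametrization already established in the theorem, the remaining $d$ nodes of the quadrature rule having $x$ as a node are exactly the roots of $F(\x,x)$ in $\R\cup\{\infty\}$, where the point $\infty$ occurs as a node precisely when $F(\x,x)$ acquires a root at infinity, i.e. when its degree in $\x$ drops below $d$. Since $x$ itself is a finite real node, it therefore suffices to show that $F(\x,x)$ has degree exactly $d$ in $\x$, and the whole argument reduces to identifying the coefficient of $\x^d$ in $F$ and relating its non-vanishing to the hypothesis $\det(xM_{d-1}-M_{d-1}')\neq 0$.

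First I would read off the top $\x$-coefficient of $F$ from representation (a). Viewing the matrix pencil there as an affine-linear pencil in $\x$,
\[
\x\y M_{d-1}-(\x+\y)M'_{d-1}+M''_{d-1}\ =\ \x\bigl(\y M_{d-1}-M'_{d-1}\bigr)+\bigl(M''_{d-1}-\y M'_{d-1}\bigr),
\]
the determinant is a polynomial of degree at most $d$ in $\x$, and its coefficient of $\x^d$ is the determinant of the $\x$-linear part, namely $\det(\y M_{d-1}-M'_{d-1})$. This is a polynomial in $\y$; specializing $\y=x$ shows that the coefficient of $\x^d$ in $F(\x,x)$ equals $\det(xM_{d-1}-M'_{d-1})$. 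Hence, under the hypothesis $\det(xM_{d-1}-M'_{d-1})\neq 0$, the polynomial $F(\x,x)$ has degree exactly $d$ in $\x$, so none of its $d$ roots lies at infinity.

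Combining the two steps finishes the argument: since $\deg_{\x}F(\x,x)=d$, the point $\infty$ is not among the $d$ remaining nodes, and together with the finite node $x$ this shows the associated quadrature rule has all its nodes on the real line. I expect the only delicate point to be the bookkeeping at infinity, namely verifying that a drop in the $\x$-degree of $F(\x,x)$ corresponds, via $\ev_\infty$, to a genuine node at $\infty$ in the sense of Definition \ref{dfqr}. This correspondence is, however, exactly the content already built into the parametrization "roots of $F(\x,y)$ are the other nodes, among them possibly $\infty$," so once that statement is in hand the leading-coefficient computation above completes the proof.
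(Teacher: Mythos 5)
Your computation of the top $\x$-coefficient is correct, and it is exactly the observation the paper itself uses (in the proof of Lemma~\ref{lem:kernel}): writing the matrix in (a) as $\x(\y M_{d-1}-M'_{d-1})+(M''_{d-1}-\y M'_{d-1})$, the coefficient of $\x^d$ in $F$ is $F_\infty(\y):=\det(\y M_{d-1}-M'_{d-1})$, so $F_\infty(x)\neq 0$ forces $\deg_\x F(\x,x)=d$. However, your argument proves only the final ``Moreover'' sentence (parts (a), (b) and the existence and uniqueness of $F$ are simply taken as given), and even for that sentence there is a genuine gap at the step you flag as ``delicate'' and then dismiss: the equivalence ``$\infty$ is a node of the rule through $x$ if and only if $\deg_\x F(\x,x)<d$'' is \emph{not} built into Theorem~\ref{thm:main}. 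The iff in the theorem characterizes when two \emph{real} numbers are simultaneously nodes; it is silent about the node $\infty$. The sentence you quote (``roots of $F(\x,y)$ are the other nodes, among them possibly $\infty$'') is the informal gloss from the paper's introduction, not a proved statement, and it is precisely the content of the claim you are trying to establish — so as written the argument is circular.

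To close the gap one needs an actual argument that a node at infinity forces the degree drop. The paper gets this from Corollary~\ref{cor:Uniqueness} (for each real $x$ there is a \emph{unique} rule with $d+1$ nodes through $x$) together with the reality of all roots of $F(\x,x)$, which comes from the pencil containing the positive definite matrix $M(x,x)$: every root is then a node by uniqueness, and counting the $d$ roots against the $d$ remaining nodes leaves no room for $\infty$ (strictly, one must also rule out multiple roots, since otherwise $d$ roots could collapse onto $d-1$ real nodes). A short self-contained alternative that avoids uniqueness and multiplicity issues entirely: suppose the rule through $x$ had nodes $x,r_1,\dots,r_{d-1}\in\R$ and $\infty$, with weights $w_0,w_1,\dots,w_{d-1},w_\infty>0$. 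For $p,q\in\R[\t]_{\leq d-1}$ the polynomial $(x-\t)pq$ has degree at most $2d-1$, so $\ev_\infty^{2d}\bigl((x-\t)pq\bigr)=0$, and the node at $x$ contributes nothing; the quadrature identity therefore gives
\[
\int (x-\t)\,p\,q\ d\mu \ =\ \sum_{j=1}^{d-1} w_j\,(x-r_j)\,p(r_j)\,q(r_j),
\]
exhibiting the bilinear form represented by $xM_{d-1}-M'_{d-1}$ as a sum of $d-1$ rank-one forms, hence singular, i.e.\ $F_\infty(x)=0$. The contrapositive is exactly the ``Moreover'' claim; only when combined with such an argument does your leading-coefficient computation complete the proof.
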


We prove the two parts of this theorem in Sections~\ref{sec:BilinearDetRep} and \ref{sec:LinearDetRep}.
The implications for finding quadrature rules as generalized eigenvalues are made explicit in 
Remarks~\ref{rem:GenEigBilinear} and \ref{rem:GenEigLinear}.
In Section~\ref{sec:Generalizations}, we discuss possible generalizations of this result.  

\subsection*{Acknowledgments}
This collaboration was initiated during the Oberwolfach workshop
``Real algebraic geometry with a view toward moment problems and optimization'' in March 2017.
Our thanks go to the organizers of the workshop and to the Mathematisches Forschungsinstitut Oberwolfach. 
We thank Petter Brändén, Christoph Hanselka, Rainer Sinn and Victor Vinnikov for interesting discussions 
during the workshop.
Grigoriy Blekherman was partially supported by  National Science Foundation (DMS-1352073).
Cordian Riener was partially supported by the Oberwolfach Fellowship program and  the  Troms\o~ Research Foundation (17\_matte\_CR).
Markus Schweighofer was partially supported by the German Research Foundation (SCHW 1723/1-1).
Cynthia Vinzant was partially supported by the National Science Foundation (DMS-1620014).

\section{Bilinear Determinantal Representation}\label{sec:BilinearDetRep}
In this section, we prove Theorem~\ref{thm:main}(a). The proof relies heavily on understanding 
a particular symmetric bilinear form $B_{x,y}$ on  $\R[\t]_{\leq d-1}$. 
For $x,y\in \R$ and $p,q\in \R[\t]_{\leq d-1}$, let
\[B_{x,y}(p,q) \ = \ \int (x-\t)(y-\t)  p q \ d\mu \  = \ xy\int p q \ d\mu-(x+y)\int  \t\cdot p q \ d\mu+\int  \t^2\cdot p q \ d\mu.\]
Note that the symmetric matrix $xy  M_{d-1} - (x+y)M'_{d-1} + M''_{d-1}$ represents $B_{x,y}$ with 
respect to the basis $1,\t,\t^2,\hdots, \t^{d-1}$. 
Define $F\in \R[\x,\y]$ to be the polynomial \[F\ \ = \ \ \det(\x\y  M_{d-1} - (\x+\y)M'_{d-1} + M''_{d-1}).\]
We will show that $F$ satisfies the requirements of Theorem~\ref{thm:main}. 

\begin{proof}[Proof of Theorem~\ref{thm:main}(a)] 
Suppose that there exists a quadrature rule for $\mu$ of degree $2d$  with nodes 
$r_1, r_2, \hdots, r_{d+1}$ in $\R\cup\{\infty\}$, meaning that there exist $w_1, \hdots, w_{d+1}\in \R_{>0}$ so that 
\begin{equation}\label{eq:nodes}
 \int f \ d\mu   \ \ = \ \ \sum_{i=1}^{d+1} w_i \ev_{r_i}(f) \ \ \text{ for all } \ \ f\in \R[\t]_{\leq 2d}.
 \end{equation}
Note that because $\mu$ is non-degenerate in degree $d$, 
the nodes $r_1, \hdots, r_{d+1}$ are necessarily distinct, so that after reindexing 
we may assume that $r_1, \hdots, r_d\in \R$. 

We claim that $F(r_1, r_2) = 0$. To see this, let $q$ be the unique (up to scaling) nonzero polynomial 
with $\deg(q)\leq d-1$ and $\ev_{r_i}(q)=0$ for each $i=3, \hdots, d+1$. If each $r_i\in \R$, 
then we can take $ q$ to be $(\t-r_3)\hdots (\t-r_{d+1})\in \R[\t]_{\leq d-1}$. 
If $r_{d+1}=\infty$, then we can take $ q= (\t-r_3)\hdots (\t-r_{d})$.
For any $p\in \R[\t]_{\leq d-1}$, it follows that
\[B_{r_1,r_2}(p,q) \ = \ \int (r_1-\t)(r_2-\t)  p q \ d\mu \  =\  0. \]  
The last equality follows from \eqref{eq:nodes} and the fact that $\deg(p)\leq d-1$. 
Therefore $q$ is an element of the right kernel of $B_{r_1, r_2}$. Since $B_{r_1, r_2}$ drops rank, the 
determinant $F(r_1, r_2)$ of the representing matrix equals zero.  

Conversely, suppose that for $x,y\in \R$, $F(x,y)=0$. 
Then the kernel of $B_{x,y}$ contains a polynomial $q\in \R[\t]_{\leq d-1}$. 
For all $p\in \R[\t]_{\leq d-1}$, 
\[  \int (\t-x)(\t-y)p q\ d\mu = 0.  \]
First, we argue that $x, y,$ and the roots of $q$ are real and pairwise distinct and 
that the degree of $q$ is $d-2$ or $d-1$. 
If not, there would exist a non-zero polynomial $p\in \R[\t]_{\leq d-1}$ 
for which $f=(\t-x)(\t-y)p q$ is nonnegative on $\R$.
Since $\int f \ d\mu =0$, this contradicts the assumption that $\mu$ is non-degenerate in degree $d$. 

Let $r_1=x$, $r_2=y$ and denote the roots of $q$ by $r_3, \hdots, r_{d+1}$, 
where we take $r_{d+1}=\infty$ if $\deg(q) = d-2$. Consider the conic hull of 
the $d+1$ points $\ev_{r_1}, \hdots, \ev_{r_{d+1}}$ in $\R[\t]_{\leq 2d}^*$.  
This is a $(d+1)$-dimensional simplicial convex cone in the $(2d+1)$-dimensional 
vectorspace $\R[\t]_{\leq 2d}^*$. 
Therefore this cone is defined by $d$ linear equalities and $d+1$ linear inequalities,
which we will now identify by inspection. 
For each $i=1, \hdots, d+1$, let $f_i$ be the unique (up to scaling) nonzero 
polynomial of degree $\leq d$ for which 
$\ev_{r_j}(f_i) = 0$ for each $j\neq i$. For example, $f_{d+1} = \prod_{i=1}^{d}(\t-r_i)$. 
Note that the polynomials $(\t-r_1)(\t-r_2)q\cdot \t^k$ for $0\le k\le d-1$ and $f_i^2$ for $1\le i\le d+1$ 
are linearly independent in $\R[\t]_{\leq 2d}$. It therefore follows that 
the conic hull of $\ev_{r_1}, \hdots, \ev_{r_{d+1}}$ is the set of $L\in \R[\t]_{\leq 2d}^*$ 
satisfying
\[
L((\t-r_1)(\t-r_2)pq) = 0 \ \text{ for all } \ p \in \R[\t]_{\leq d-1} 
\ \ \text{ and } \ \
L(f_i^2) \geq 0 \ \text{ for } \ i=1, \hdots d+1.
\]
The linear function $L_{\mu}\in \R[\t]_{\leq 2d}$ given by $L_{\mu}(f) = \int f\ d\mu$ 
belongs to this convex cone. Hence there are nonnegative weights $w_1, \hdots, w_{d+1}$ for which 
$L_{\mu}(f) = \sum_{i=1}^{d+1}w_i \ev_{r_i}(f)$.
Since $\mu$ is non-degenerate in degree $d$, each of these weights must be positive. 
\end{proof}

\begin{rem}\label{rem:PDdiag}
Let $M = M(\x, \y) = \x\y M_{d-1}-(\x+\y)M'_{d-1}+ M''_{d-1}$. We remark that 
for every $x\in \R$, the matrix $M(x,x)$ is positive definite. 
To see this, note that $M(x,x)$ represents the quadratic 
form on $\R[\t]_{\leq d-1}$ given by $p \mapsto \int p^2(\t-x)^2 d\mu$
with respect to the monomial basis. Since $\mu$ is non-degenerate in degree $d$, 
this is positive for any $0\neq p \in \R[\t]_{\leq d-1}$. 
\end{rem}

\begin{rem}\label{rem:GenEigBilinear}
For fixed $y\in \R$, this allows us to find the roots of $F(\x,y)\in \R[\x]$ as the generalized eigenvalues of a $d\times d$ 
real symmetric matrix.  If $y$ is larger than all of the roots of $\det(\y M_{d-1} - M'_{d-1})$, then 
$yM_{d-1} - M'_{d-1}$ is positive definite, meaning that 
$M(\x,y)$ has the form $\x A-B$ where $A$ is a positive definite matrix. 
We can find the roots in $\x$ as the following generalized eigenvalue problem: 
\[
\{x : F(x,y)=0\} \ \ = \ \ \{ x   \ : \  \det(x(yM_{d-1} - M'_{d-1}) - (yM'_{d-1} -M_{d-1}''))=0\}.
\]
If $yM_{d-1} - M'_{d-1}$ is not positive definite, this formula still holds, but may not be as numerically stable. 
We can instead make a change of variables $\x=\tilde{\x}+y$, which gives
\begin{align*}
M(\x,y) \ = \ M(\tilde{\x}+y,y) 
&\ = \  (\tilde{\x}+y)yM_{d-1}-(\tilde{\x}+2y)M'_{d-1}+ M''_{d-1} \\
& \ = \  \tilde{\x}(yM_{d-1} - M'_{d-1}) + M(y,y).
\end{align*}
In particular, this has the form $\tilde{\x}A+B$ where $B$ is positive definite. Suppose $\lambda$ is a root 
of $\det(\blambda B- A)$. Then $\tilde{\x}=-1/\lambda$ is a solution to $\det(\tilde{\x}A+B)=0$. Therefore 
\[\{x : F(x,y)=0\} \ \ = \ \ \{ y-1/\lambda   \ : \  \det(\lambda M(y,y) - yM_{d-1} + M'_{d-1})=0\}.\]
Note that this even works when $\lambda = 0$ if we take $y-1/\lambda$ to be $\infty$. 
\end{rem}

\begin{cor}\label{cor:Uniqueness}
Let $\mu$ be a measure on $\R$ that is non-degenerate in degree $d\geq1$.
For $y\in \R$ there is a unique quadrature rule for $\mu$ of degree $2d$ 
with $d+1$ nodes, one of which is $y$. 
\end{cor}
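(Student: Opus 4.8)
The plan is to read off the other $d$ nodes as the roots of the univariate polynomial $F(\x,y)$ and to control them through the symmetric pencil of Remark~\ref{rem:GenEigBilinear}. Fix $y\in\R$ and write $A=yM_{d-1}-M'_{d-1}$ and $C=M(y,y)$, so that after the substitution in Remark~\ref{rem:GenEigBilinear} we have $F(\x,y)=\det((\x-y)A+C)$. By Remark~\ref{rem:PDdiag} the matrix $C$ is positive definite, so the pencil $(A,C)$ is simultaneously diagonalizable and its $d$ generalized eigenvalues $\la_1,\ldots,\la_d$ (the solutions of $\det(\la C-A)=0$) are real. Hence $F(\x,y)$ has exactly $d$ roots $r_i=y-1/\la_i$ in $\R\cup\{\infty\}$ (with $\la_i=0$ giving $r_i=\infty$), all real or infinite, and none equal to $y$ since $F(y,y)=\det C>0$.

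The crux is to show these $d$ roots are \emph{pairwise distinct}; equivalently, that for every $r\in\R$ with $r\neq y$ the form $B_{r,y}$ has at most a one-dimensional kernel, and that $\ker A$ is at most one-dimensional (so $\infty$ occurs at most once). I would argue by contradiction. Suppose $\dim\ker B_{r,y}\geq2$. Since $q(r)=0$ is a single linear constraint, there is a nonzero kernel element $q$ with $q(r)=0$; write $q=(\t-r)\hat q$ with $0\neq\hat q\in\R[\t]_{\leq d-2}$. Testing the relation $\int(\t-r)(\t-y)pq\,d\mu=0$ against $p=(\t-y)\hat q\in\R[\t]_{\leq d-1}$ gives $\int(\t-r)^2(\t-y)^2\hat q^2\,d\mu=0$, whose integrand is a nonzero nonnegative polynomial of degree $2d$, contradicting non-degeneracy of $\mu$ in degree $d$. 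The same device with the constraint $q(y)=0$ and the test polynomial $p=\hat q$ rules out $\dim\ker A\geq2$. I expect this perfect-square reduction---using the fixed node to absorb the factor $(\t-r)$ so that the test polynomial stays within degree $d-1$---to be the main technical point.

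With $d$ distinct roots $r_1,\ldots,r_d\in\R\cup\{\infty\}$ in hand (at most one infinite), existence follows from Theorem~\ref{thm:main}(a). Since at most one $r_i$ is infinite, a finite root exists except in the single degenerate case $d=1$ with $y=m_1/m_0$, where the unique node is $\infty$ and one checks the rule $\{y,\infty\}$ with weights $m_0$ and $\det(M_1)/m_0$ directly. Otherwise, applying the converse direction of Theorem~\ref{thm:main}(a) to a finite root, say $r_1$, produces a quadrature rule with nodes $r_1$, $y$, and the $d-1$ roots of the associated kernel polynomial $q$; by the forward direction each such root is again a root of $F(\cdot,y)$, so by distinctness they fill out precisely $\{r_2,\ldots,r_d\}$, giving the node set $\{y,r_1,\ldots,r_d\}$.

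Uniqueness is then almost immediate. Any quadrature rule of degree $2d$ with $d+1$ nodes, one of which is $y$, has $d$ further pairwise distinct nodes, and by Theorem~\ref{thm:main}(a) each of these is a root of the polynomial $F(\cdot,y)$ of degree at most $d$; being $d$ distinct roots, they must exhaust the root set $\{r_1,\ldots,r_d\}$. Hence the node set is forced to be $\{y,r_1,\ldots,r_d\}$, and since the evaluation functionals at distinct points together with $\ev_\infty$ are linearly independent on $\R[\t]_{\leq 2d}$, the weights are uniquely determined as well.
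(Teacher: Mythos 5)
Your proof is correct and follows essentially the same route as the paper: real-rootedness of $F(\cdot,y)$ from the fact that the pencil $M(\x,y)$ contains the positive definite matrix $M(y,y)$ (Remark~\ref{rem:PDdiag}), existence from the converse direction of Theorem~\ref{thm:main}(a), and uniqueness by identifying the remaining nodes with the roots of $F(\cdot,y)$ and then pinning down the weights by linear independence (the paper instead phrases this as the simplicial cone argument). Two of your additions go beyond what the paper writes. The simplicity of the roots, which you prove by the perfect-square/kernel-dimension trick, is correct but not strictly necessary: the nodes of any rule form a set of $d+1$ distinct points, so the $d$ further nodes are $d$ distinct roots of a polynomial of degree at most $d$, and simplicity falls out as a consequence rather than being needed as an input; that is how the paper argues. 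More substantively, the paper's existence argument is conditional on ``the existence of one real root $x$'' and silently skips the case where $F(\cdot,y)$ has no real root at all; you correctly isolate this as the single case $d=1$, $y=m_1/m_0$, and verify the rule $\{y,\infty\}$ with weights $m_0$ and $\det(M_1)/m_0$ by hand. This genuinely completes a point the paper's proof leaves open.

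One soft spot, which the paper's proof shares, so it does not sink your argument, but you should be aware of it: nodes at infinity. Theorem~\ref{thm:main}(a) is a statement about \emph{real} pairs $(x,y)$ only, so when a competing rule has $\infty$ among its further nodes, the step ``by Theorem~\ref{thm:main}(a) each of these is a root of $F(\cdot,y)$'' does not literally apply; you need to know that a rule with nodes $y,a_1,\ldots,a_{d-1},\infty$ forces $\det(yM_{d-1}-M'_{d-1})=0$, i.e.\ a zero generalized eigenvalue, so that $\infty$ is a root in your projective sense. The fix is one line: $q=\prod_{i=1}^{d-1}(\t-a_i)$ lies in the kernel of $yM_{d-1}-M'_{d-1}$, because for every $p\in\R[\t]_{\leq d-1}$ the polynomial $(\t-y)pq$ has degree at most $2d-1$ and vanishes at all finite nodes, so the quadrature rule gives $\int(\t-y)pq\,d\mu=0$ (the $\ev_{\infty}^{2d}$ term also vanishes by the degree bound). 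Alternatively, your closing observation already yields the entire uniqueness statement with no node/root identification at all: two rules with $d+1$ nodes each, both containing $y$, have node sets whose union consists of at most $2d+1$ distinct points of $\R\cup\{\infty\}$; the corresponding evaluation functionals are linearly independent on $\R[\t]_{\leq 2d}$, so the two representations of $L_{\mu}$ must coincide term by term, forcing equal node sets (by positivity of the weights) and equal weights.
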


\begin{proof}
Let $y\in \R$. The polynomial $F(\x,y) = \det(M(\x,y)) \in \R[\x]$ has degree at most $d$.  Since the 
matrix pencil $\{M(x,y): x\in \R\}$ contains a positive definite matrix $M(y,y)$, the roots of 
$F(\x,y)=0$ must all be real. In particular the existence of one real root $x$ implies, by Theorem~\ref{thm:main},
the existence of a quadrature rule of degree $2d$ of $\mu$ whose $d+1$ nodes include $y$. 
Moreover the other $d$ nodes $x, r_3, \hdots, r_{d+1}$ are necessarily the $d$ roots of $F(\x,y)=0$. 
As in the proof of Theorem~\ref{thm:main}, the conic hull of $\ev_x, \ev_y, \ev_{r_3}, \hdots, \ev_{r_{d+1}}$
is a simplicial cone containing $L_{\mu}$, meaning that there is a unique representation of $L_{\mu}$
as a nonnegative combination of them. 
\end{proof}

\begin{ex}
[Normal Distribution, $d=3$] \label{ex:Normal}
Let $\mu$ be the normal Gaussian distribution on $\R$ with mean $0$ and variance $1$. 
Its moments are given by $m_{2i+1}=0$ and $m_{2i} = (2i-1)!!$ for $i\in \N_0$. 
For example, 
the first seven moments of this measure are $m_1=m_3=m_5=0$, $m_0=m_2=1$, $m_4=3$, and $m_6 = 15$. 
For $d=3$, 
the polynomial $F$ given by Theorem~\ref{thm:main} is the determinant of the $3\times3$ matrix polynomial
\[
M \ = \  \begin{pmatrix}
 \x \y+1 & -(\x+\y) & \x \y+3 \\
 -(\x+\y) & \x \y+3 & -3 (\x+\y) \\
 \x \y+3 & -3 (\x+\y) & 3 \x \y+15 \\
\end{pmatrix}.
\]
For fixed $y\in \R$, the polynomial $F(\x,y)\in \R[\x]$ has three real roots, $r_1, r_2, r_3 \in \R\cup\{\infty\}$, 
which, together with $y$, form the nodes of a quadrature rule for $\mu$ of degree $6$. 
The matrix $M(\x,y)$ has the form $\x A+B$ 
for some real symmetric matrices $A$ and $B$ (depending on $y$). 
The roots of $F(\x,y)$ are the roots of $\det(\x A + B)$. 

\begin{figure}
\begin{center} 
\includegraphics[height=3in]{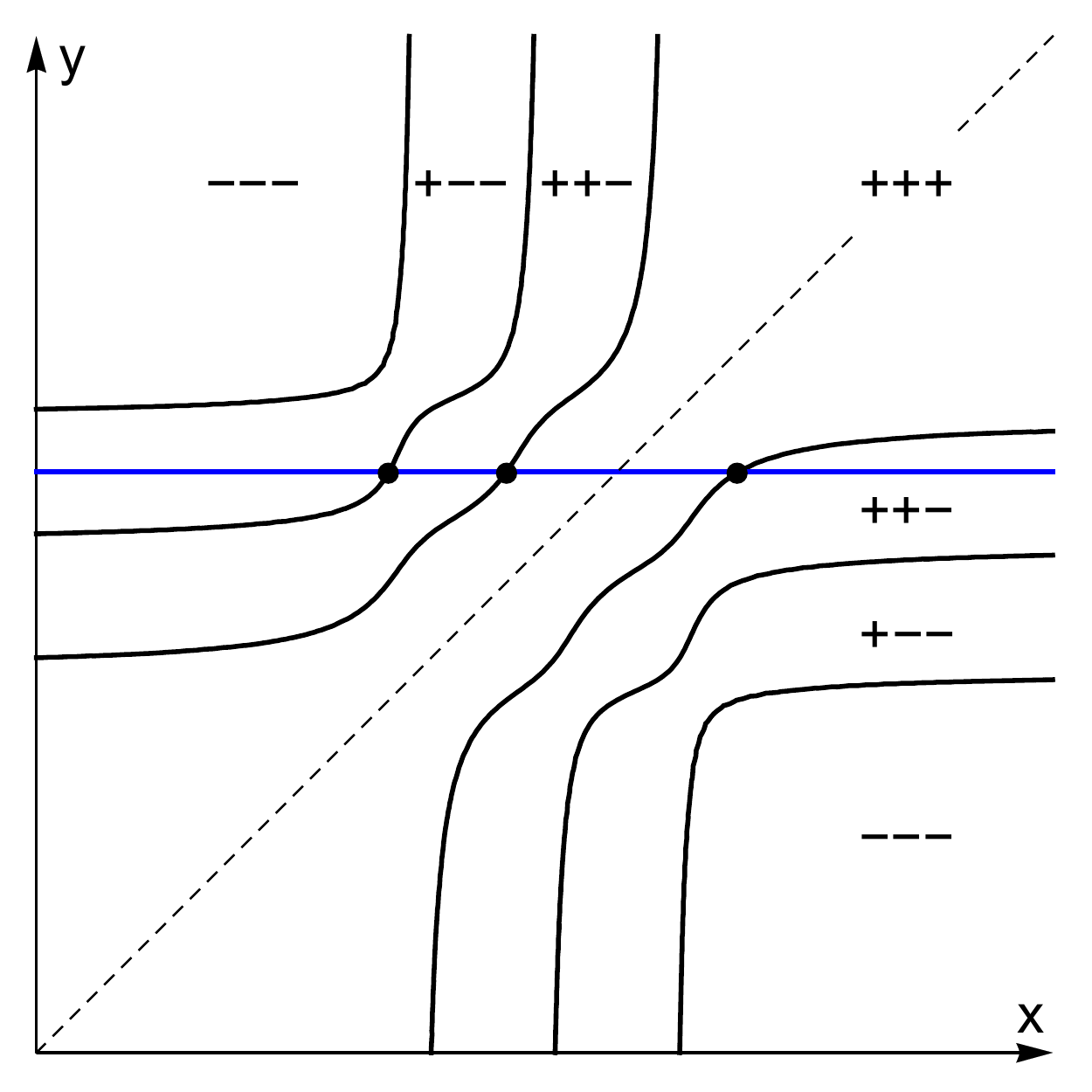}
\end{center}
\caption{\label{fig:Normal} 
The sextic curve given by $F=0$, the line $y=1$, and signs of eigenvalues of the $3\times 3$ matrix polynomial
$M$
 from Example~\ref{ex:Normal}.
}
\end{figure}

Moreover, by making a change of coordinates we can make $A$ positive definite. 
For example, for $y=1$, we set $\x =1/\blambda+1$ to get
 \[
\blambda \cdot M(1/\blambda+1,1) = 
\begin{pmatrix}
 2 \blambda+1 & -2 \blambda-1 & 4 \blambda+1 \\
 -2 \blambda-1 & 4 \blambda+1 & -6 \blambda-3 \\
 4 \blambda+1 & -6 \blambda-3 & 18 \blambda+3 \\
\end{pmatrix},
\]
which has the form $\blambda A-B$, where $A=M(1,1)$ is positive definite.  
Solving the generazlied eigenvalue problem $\det(\blambda A - B)=0$ 
gives $\blambda \approx -0.66, -0.32, 0.60$. The solutions to $F(\x,1)=0$ 
are then $\x = 1+1/\blambda \approx-2.15, -0.52, 2.67$. 
Thus there is a quadrature rule for $\mu$ of degree $6$ with 
nodes consisting of $1$ and these three roots. 
The curve given by $F(x,y)=0$ along with the line $\y=1$ are shown 
in Figure~\ref{fig:Normal}. 

For $y\in \{0,\pm \sqrt 3\}$, the polynomial $F(\x,y)\in \R[\x]$ becomes quadratic with two real roots 
$\{0,\pm \sqrt{3}\}\backslash\{y\}$. From this we conclude that there is a quadrature rule of degree $6$ 
for $\mu$ with the four nodes $0,\pm \sqrt 3, \infty$. 

For $y>\sqrt{3}$, the matrix coefficient of $\x$ in $M(\x,y)$ is positive definite. 
In this case, $M(\x,y)$ already has the form $\x A-B$ where $A$ is positive definite, so 
no change of coordinates is required to translate this into a generalized eigenvalue problem.  
\hfill $\diamond$
\end{ex}

\section{Linear Determinantal Representation}\label{sec:LinearDetRep}

In this section we prove Theorem~\ref{thm:main}(b). As in Section~\ref{sec:BilinearDetRep}, we construct a bilinear form depending 
on $x,y\in \R$ and construct a non-zero kernel of it for 
 those pairs $x,y$ that can be extended to $d+1$ nodes of a quadrature rule
for $\mu$ of degree $2d$.

For $x,y\in \R$, we define a bilinear form $B_{x,y}$ on $\R \oplus \R[\t]_{\leq d-1} \oplus \R[\t]_{\leq d-1} \cong \R^{2d+1}$. 
Given $p = (p_0, p_1, p_2)$ and $q = (q_0,q_1,q_2)$ in $\R \oplus \R[\t]_{\leq d-1} \oplus \R[\t]_{\leq d-1}$, 
define 
\begin{align*}
B_{x,y}(p,q) = & \int \! p_1q_1 (x-\t) + p_2q_2 (\t-y)  d\mu \\
&+ \ev^{d-1}_{\infty}(q_0(p_1+p_2)+p_0(q_1+q_2)) +\frac{p_0q_0\det(M_{d-1})}{\det(M_d)}(x-y).
\end{align*}
Choosing the basis $1,\t,\hdots, \t^{d-1}$ for both copies of $\R[\t]_{\leq d-1}$ represents this 
symmetric bilinear form as the $(2d+1)\times (2d+1)$ matrix given in Theorem~\ref{thm:main}(b). 
That is, if $\vec f$ denotes the coefficients of $f\in \R[\t]_{\leq d -1}$ so that $f = \vec{f} \cdot (1,\t,\hdots, \t^{d-1})$,  
then 
\[
B_{x,y}(p,q) \ = \
\begin{pmatrix}
p_0 \\ \vec{p_1} \\ \vec{p_2}
\end{pmatrix}^T
\begin{pmatrix}
       \frac{\det M_{d-1}}{\det M_d} (x-y) &e_d^T&e_d^T\\
   e_d&xM_{d-1}-M_{d-1}'&0\\
   e_d&0&-yM_{d-1}+M_{d-1}'
      \end{pmatrix}
\begin{pmatrix}
q_0 \\ \vec{q_1} \\ \vec{q_2}
\end{pmatrix},
\]
where $M_{d-1}$ and $M'_{d-1}$ are the $d\times d$ matrices defined in \eqref{eq:matrices}.

In order to construct a non-zero element in the kernel of $B_{x,y}$, we need to build up some basic facts. The first concerns
quadrature rules for $\mu$ whose nodes include $\infty$. 
As this polynomial will be used heavily in the text below, denote
\[F_{\infty} = \det(\x M_{d-1} -M_{d-1}').\]

\begin{lem}\label{lem:InftyNodes}
The polynomial $F_{\infty}$ has $d$ real roots $s_1, \hdots, s_d \in \R$
and there exist weights $w_1, \hdots, w_d\in \R_{> 0}$ for which 
\begin{equation}\label{eq:InftyQuad}
\int f \ d\mu  \ \ = \ \  w_{\infty} \ev_{\infty}^{2d}(f) + \sum_{i=1}^d w_i \ev_{s_i}(f) \ \ \text{ for all } \ \ f\in \R[\t]_{\leq 2d},
\end{equation}
where $w_{\infty} = \det(M_d)/\det(M_{d-1})$. 
\end{lem}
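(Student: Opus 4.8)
The plan is to realize the claimed rule as the classical degree-$(2d-1)$ Gaussian quadrature rule augmented by a single node at $\infty$. First I would record that $\x M_{d-1}-M_{d-1}'$ represents the pencil of symmetric bilinear forms $(p,q)\mapsto\int(\x-\t)pq\,d\mu$ on $\R[\t]_{\leq d-1}$. Since $\mu$ is non-degenerate in degree $d$ (hence in degree $d-1$), the matrix $M_{d-1}$ is positive definite, so this is a symmetric--definite pencil and all $d$ roots of $F_\infty=\det(\x M_{d-1}-M_{d-1}')$ are real; equivalently, these roots are exactly the nodes of the unique Gaussian quadrature rule for $\mu$ in degree $2d-1$, as recalled in the introduction (apply that statement with $d-1$ in place of $d$). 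Thus there are weights $w_1,\dots,w_d\in\R_{>0}$ with $\int f\,d\mu=\sum_{i=1}^d w_i\,\ev_{s_i}(f)$ for all $f\in\R[\t]_{\leq 2d-1}$.

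It remains to upgrade exactness from degree $2d-1$ to degree $2d$ by adjoining the node $\infty$ with weight $w_\infty$. Both sides of \eqref{eq:InftyQuad} are linear in $f$, and on $\R[\t]_{\leq 2d-1}$ the identity already holds, since $\ev_\infty^{2d}$ vanishes there and the $w_i$ form a degree-$(2d-1)$ rule. Set $q=\prod_{i=1}^d(\t-s_i)$, a monic polynomial of degree $d$. Because $pq\in\R[\t]_{\leq 2d-1}$ and $q(s_i)=0$ for every $i$, the degree-$(2d-1)$ rule gives $\int pq\,d\mu=\sum_i w_i\,p(s_i)q(s_i)=0$ for all $p\in\R[\t]_{\leq d-1}$; that is, $q$ is $\mu$-orthogonal to $\R[\t]_{\leq d-1}$. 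Since $q^2$ together with $\R[\t]_{\leq 2d-1}$ spans $\R[\t]_{\leq 2d}$, it suffices to verify \eqref{eq:InftyQuad} on $f=q^2$. There $\ev_\infty^{2d}(q^2)=1$ and $\sum_i w_i\,q(s_i)^2=0$, so the required identity reduces to the single scalar statement $\int q^2\,d\mu=w_\infty=\det(M_d)/\det(M_{d-1})$.

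The heart of the proof is therefore this norm computation, which I expect to be the main obstacle. Writing $q=\t^d+\sum_{j=0}^{d-1}c_j\t^j$ and using orthogonality, the coefficient vector $\vec q=(c_0,\dots,c_{d-1},1)^T$ satisfies $M_d\,\vec q=\bigl(\int q^2\,d\mu\bigr)\,e$, where $e$ is the last standard basis vector of $\R^{d+1}$: the first $d$ coordinates of $M_d\vec q$ are the orthogonality relations $\int q\,\t^k\,d\mu=0$ for $0\le k\le d-1$, while the last coordinate is $\int q\,\t^d\,d\mu=\int q^2\,d\mu$ (the lower-degree part of $q$ is $\mu$-orthogonal to $q$). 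Comparing last coordinates via Cramer's rule---equivalently, using $(M_d^{-1})_{d+1,d+1}=\det(M_{d-1})/\det(M_d)$, since deleting the last row and column of $M_d$ recovers $M_{d-1}$ and the relevant cofactor sign is $+1$---yields $1=\bigl(\int q^2\,d\mu\bigr)\det(M_{d-1})/\det(M_d)$, i.e. $\int q^2\,d\mu=\det(M_d)/\det(M_{d-1})=w_\infty$. Finally $\det(M_d)>0$ and $\det(M_{d-1})>0$ by non-degeneracy, so $w_\infty>0$ and all displayed weights are positive, completing the proof.
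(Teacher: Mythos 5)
Your proof is correct, but it follows a genuinely different route from the paper's. The paper stays inside its own machinery: it takes a root $s$ of $F_\infty$, uses Theorem~\ref{thm:main}(a) (already proved in Section~\ref{sec:BilinearDetRep}) to pair $s$ with other nodes, and uses the uniqueness statement of Corollary~\ref{cor:Uniqueness} to show that every finite node accompanying $s$ must again be a root of $F_\infty$, which forces the existence of a rule with nodes $s_1,\hdots,s_d,\infty$; it then pins down $w_\infty$ by an extremal characterization, namely as the largest $\lambda$ for which $M_d-\lambda e_{d+1}e_{d+1}^T$ is positive semidefinite, and solves the resulting linear determinant equation. You instead bypass Section~\ref{sec:BilinearDetRep} entirely: you invoke the classical odd-degree Gaussian quadrature theorem recalled in the introduction (with $d-1$ in place of $d$) to identify the roots of $F_\infty$ as the nodes of the degree-$(2d-1)$ rule, then upgrade exactness from degree $2d-1$ to $2d$ by adjoining the node $\infty$, reducing everything to the single identity $\int q^2\,d\mu = \det(M_d)/\det(M_{d-1})$ for the monic orthogonal polynomial $q=\prod_i(\t-s_i)$, which you verify via $M_d\vec q=\bigl(\int q^2\,d\mu\bigr)e_{d+1}$ and Cramer's rule. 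Each approach has its merits: yours is independent of Theorem~\ref{thm:main}(a) and Corollary~\ref{cor:Uniqueness} (so it could stand on its own, before Section~\ref{sec:BilinearDetRep}), and it gives $w_\infty$ a concrete meaning as the squared $L^2(\mu)$-norm of the degree-$d$ monic orthogonal polynomial; the paper's version avoids relying on the external classical odd-degree result and reuses the bivariate polynomial $F$ that is the central object of the paper. Note also that the two weight computations are close cousins: both ultimately rest on the cofactor identity expressing that deleting the last row and column of $M_d$ yields $M_{d-1}$.
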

\begin{proof} 
Since $M_{d-1}$ is positive definite, $F_{\infty}$ has $d$ real roots $s_1, \hdots, s_d\in \R$, up to multiplicity. 
For any root $s$, 
the matrix $sM_{d-1} - M'_{d-1}$ has rank $<d$, meaning that the polynomial 
\[F(\x,s) = \det(\x(sM_{d-1} - M'_{d-1}) - (sM'_{d-1} -M_{d-1}''))\]
has degree $\leq d-1$ in $\x$.  For any $r\in \R$ with $F(r,s)=0$, there is a quadrature
rule for $\mu$ of degree $2d$ with $d+1$ nodes containing $s$, $r$, and $\infty$. 
Then the unique quadrature rule of degree $2d$ with $d+1$ nodes containing $r$ 
also contains the node $\infty$. This implies that $F(\x,r)$ has degree $\leq d-1$ and $F_\infty(r)=0$, 
meaning that $r\in \{s_1, \hdots, s_d\}$. Therefore there is a quadrature rule of degree $2d$ for $\mu$
with nodes $s_1, \hdots, s_d, \infty$. In particular, there exist $w_1, \hdots, w_d, w_\infty\in \R_{>0}$ for which 
\eqref{eq:InftyQuad} holds. 

Then $w_\infty$ is the largest $\lambda$ for which the quadratic 
form $p \mapsto \int p^2 d\mu - \lambda \ev_{\infty}(p^2)$ is nonnegative on $\R[\t]_{\leq d}$.  
This is the largest $\lambda$ for which the matrix $M_d - \lambda e_{d+1}e_{d+1}^T$ is positive semidefinite. 
We find this by solving the equation $\det(M_d - \blambda e_{d+1}e_{d+1}^T) = 0$, which gives 
$\blambda = w_{\infty} = \det(M_d)/ \det(M_{d-1})$.
\end{proof}

\begin{lem}\label{lem:kernel}
Let $w_{\infty}, s_1, \hdots, s_d\in \R$ be as given by Lemma~\ref{lem:InftyNodes}.
If $y\in \R$ with $F_{\infty}(y)\neq 0$, 
then there is a quadrature rule for $\mu$ of degree $2d$ with nodes 
$y, r_1,\hdots, r_d\in  \R$. 
Let
\[
q_y  \ \ = \ \  \prod_{i=1}^{d}(\t-s_i) - \prod_{j=1}^d(\t-r_j)\ \ \in \ \ \R[\t]_{\leq d-1}.
\] 
Then for all $p\in \R[\t]_{\leq d-1}$, 
\[
\int p \cdot q_y \cdot (\t-y) \ d\mu \ \ = \ \ w_\infty \ev^{d-1}_{\infty}(p).
\]
\end{lem}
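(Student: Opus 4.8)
The plan is to prove the statement in two stages: first produce the finite quadrature rule that is asserted, and then verify the integral identity by splitting $q_y$ into its two monic factors and evaluating the resulting integrals against two \emph{different} quadrature rules.

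For the existence of a rule with all nodes in $\R$, I would invoke Corollary~\ref{cor:Uniqueness}: there is a unique degree-$2d$ quadrature rule for $\mu$ whose $d+1$ nodes include $y$, and its remaining $d$ nodes are exactly the roots of $F(\x,y)=\det(M(\x,y))\in\R[\x]$. Writing $M(\x,y)=\x\,(yM_{d-1}-M'_{d-1})+(M''_{d-1}-yM'_{d-1})$, the coefficient of $\x^d$ in $F(\x,y)$ is $\det(yM_{d-1}-M'_{d-1})=F_\infty(y)$, which is nonzero by hypothesis. Hence $F(\x,y)$ has degree exactly $d$, so all $d$ of its roots are finite and none escapes to $\infty$; these are the nodes $r_1,\dots,r_d\in\R$. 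This step is precisely what guarantees that $R:=\prod_{j=1}^d(\t-r_j)$ is genuinely monic of degree $d$, so that $q_y=\prod_i(\t-s_i)-R$ has degree $\le d-1$ and indeed lies in $\R[\t]_{\le d-1}$ as claimed.

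For the identity, set $S=\prod_{i=1}^d(\t-s_i)$, so $q_y=S-R$ and $\int p\,q_y\,(\t-y)\,d\mu=\int p(\t-y)S\,d\mu-\int p(\t-y)R\,d\mu$ for every $p\in\R[\t]_{\le d-1}$. Both integrands have degree at most $(d-1)+1+d=2d$, so each of the two quadrature rules at my disposal may be applied to them. Against the finite rule with nodes $y,r_1,\dots,r_d$, the factor $(\t-y)$ annihilates the node $y$ while $R$ annihilates every $r_j$, so every term vanishes and $\int p(\t-y)R\,d\mu=0$. Against the rule of Lemma~\ref{lem:InftyNodes}, $S$ vanishes at each $s_i$, leaving only the node at infinity; since $(\t-y)S$ is monic of degree $d+1$, the coefficient of $\t^{2d}$ in $p(\t-y)S$ equals the leading coefficient of $p$, that is $\ev_\infty^{d-1}(p)$, whence $\int p(\t-y)S\,d\mu=w_\infty\,\ev_\infty^{2d}(p(\t-y)S)=w_\infty\,\ev_\infty^{d-1}(p)$. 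Subtracting the two evaluations yields the desired formula.

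I do not expect a deep obstacle here: the whole argument amounts to matching each integral with the quadrature rule that kills it. The two points that require genuine care are the existence step—one must be certain that $F_\infty(y)\neq0$ forces $\deg_\x F(\x,y)=d$, which is what keeps all nodes finite and makes $q_y$ a polynomial of degree $\le d-1$—and the bookkeeping of the evaluation at infinity, where one should check that $\ev_\infty^{2d}(p(\t-y)S)=\ev_\infty^{d-1}(p)$ holds uniformly, including the degenerate case $\deg p<d-1$ in which both sides vanish.
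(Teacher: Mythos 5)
Your proposal is correct and follows essentially the same route as the paper's own proof: existence and reality of the nodes $r_1,\dots,r_d$ via Corollary~\ref{cor:Uniqueness} together with the observation that $F_\infty(y)\neq 0$ is the leading coefficient of $F(\x,y)$, and then the splitting $q_y=\prod_i(\t-s_i)-\prod_j(\t-r_j)$, killing the second integral with the finite-node rule and evaluating the first with the rule of Lemma~\ref{lem:InftyNodes} at the node $\infty$. Your extra bookkeeping (the coefficient-of-$\x^d$ computation and the check that $\ev_\infty^{2d}\bigl(p\,(\t-y)\prod_i(\t-s_i)\bigr)=\ev_\infty^{d-1}(p)$ holds even when $\deg p<d-1$) only makes explicit steps the paper leaves implicit.
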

\begin{proof}
By Corollary~\ref{cor:Uniqueness}, there is a quadrature rule for $\mu$ of degree $2d$ with nodes 
$y, r_1,\hdots, r_d$ in $\R\cup \{\infty\}$. 
Since $F_{\infty}(y)\neq 0$ the univariate polynomial $F(\x,y)\in\R[\x]$ has degree $d$ and by the uniqueness of the quadrature rule, $r_1,\hdots, r_d$ are its roots. In particular, $r_j\in \R$.
Then for $p\in \R[\t]_{\leq d-1}$, we have that 
\begin{align*}
\int p \cdot q_y \cdot (\t-y) \ d\mu
\ &= \  \int p \cdot  \prod_{i=1}^{d}(\t-s_i) \cdot (\t-y) \ d\mu
- \int p \cdot  \prod_{j=1}^d(\t-r_j)\cdot (\t-y) \ d\mu\\
&= \  \int p \cdot  \prod_{i=1}^{d}(\t-s_i) \cdot (\t-y) \ d\mu\\
&= \ w_{\infty}\ev^{2d}_{\infty}\left(p \cdot  \prod_{i=1}^{d}(\t-s_i) \cdot (\t-y)\right)\\
&= \ w_{\infty}\ev^{d-1}_{\infty}(p).
 \end{align*}
The first equality comes from the fact 
that there is a quadrature rule for $\mu$ 
of degree $2d$ with nodes $y,r_1, \hdots, r_d$.
The second follows from the equality in Lemma~\ref{lem:InftyNodes}.
 \end{proof}

We now make a special choice of $q$ and use Lemma~\ref{lem:kernel}
to greatly simplify $B_{x,y}(p,q)$.

\begin{lem}\label{lem:q}
Suppose $x,y\in \R$ satisfy $F_{\infty}(x)\neq0$ and $F_{\infty}(y)\neq0$. Take $w_\infty\in \R$ and 
$q_x, q_y\in \R[\t]_{\leq d-1}$ as given by Lemmas~\ref{lem:InftyNodes} and \ref{lem:kernel} and fix
 $q = (w_{\infty}, q_x, -q_y)$.  Then for any 
$p = (p_0, p_1, p_2)\in \R\oplus \R[\t]_{\leq d-1}\oplus \R[\t]_{\leq d-1}$, 
\[
B_{x,y}(p,q)  \ \ = \ \ p_0\cdot \ev_{\infty}(q_x-q_y) +p_0 \cdot (x-y).
\]
\end{lem}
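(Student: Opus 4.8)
The plan is to substitute the specific vector $q=(w_\infty,q_x,-q_y)$ directly into the defining formula for $B_{x,y}$ and then simplify term by term, letting Lemma~\ref{lem:kernel} carry out all of the real work. Before doing so, I would record the two consequences of that lemma that I will need. Applied with the node $y$, it states that for every $p\in\R[\t]_{\leq d-1}$ one has $\int p\,q_y\,(\t-y)\,d\mu = w_\infty\,\ev_\infty^{d-1}(p)$. Because the present lemma also assumes $F_{\infty}(x)\neq 0$, the identical argument applies verbatim with $x$ in place of $y$ and $q_x$ in place of $q_y$, giving $\int p\,q_x\,(\t-x)\,d\mu = w_\infty\,\ev_\infty^{d-1}(p)$. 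These two identities, one for each slot of $q$, are the engine of the computation.

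Next I would plug $q_0=w_\infty$, $q_1=q_x$, and $q_2=-q_y$ into the definition of $B_{x,y}$ and treat its three pieces separately. The final scalar piece becomes
\[
\frac{p_0\,w_\infty\,\det(M_{d-1})}{\det(M_d)}\,(x-y) \;=\; p_0\,(x-y),
\]
since $w_\infty=\det(M_d)/\det(M_{d-1})$ by Lemma~\ref{lem:InftyNodes}. For the two integral pieces, I would rewrite $x-\t=-(\t-x)$ and use the two identities above, so that
\[
\int p_1 q_x(x-\t)+p_2(-q_y)(\t-y)\,d\mu \;=\; -w_\infty\,\ev_\infty^{d-1}(p_1)-w_\infty\,\ev_\infty^{d-1}(p_2)\;=\;-w_\infty\,\ev_\infty^{d-1}(p_1+p_2).
\]

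It then remains to expand the middle $\ev_\infty^{d-1}$ piece by linearity as $w_\infty\,\ev_\infty^{d-1}(p_1+p_2)+p_0\,\ev_\infty^{d-1}(q_x-q_y)$, whereupon the term $w_\infty\,\ev_\infty^{d-1}(p_1+p_2)$ cancels exactly against the contribution from the integrals. Collecting what survives together with the scalar piece yields $p_0\,\ev_\infty^{d-1}(q_x-q_y)+p_0\,(x-y)$, which is the assertion once one notes that $\ev_\infty$ agrees with $\ev_\infty^{d-1}$ on $\R[\t]_{\leq d-1}$. I do not expect a genuine obstacle here, as this is essentially bookkeeping; the only points requiring care are the symmetric invocation of Lemma~\ref{lem:kernel} for both $q_x$ and $q_y$ (legitimate precisely because $F_{\infty}(x)\neq 0$ and $F_{\infty}(y)\neq 0$ are both assumed) and the careful tracking of signs, in particular the $-q_y$ sitting in the third slot of $q$ and the asymmetry between $x-\t$ and $\t-y$ in the two integrands.
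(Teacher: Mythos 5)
Your proposal is correct and follows essentially the same route as the paper's own proof: substitute $q=(w_\infty,q_x,-q_y)$ into the definition of $B_{x,y}$, use $w_\infty=\det(M_d)/\det(M_{d-1})$ to reduce the scalar term to $p_0(x-y)$, apply Lemma~\ref{lem:kernel} at both $x$ and $y$ to evaluate the integral terms as $-w_\infty\ev_\infty(p_1+p_2)$, and cancel this against the corresponding piece of the $\ev_\infty$ term. The sign bookkeeping and the symmetric invocation of Lemma~\ref{lem:kernel} (valid since both $F_\infty(x)\neq 0$ and $F_\infty(y)\neq 0$) are exactly the points the paper's proof relies on as well.
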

\begin{proof}
Let  $q = (w_{\infty}, q_x, -q_y)$ and suppose $p = (p_0,p_1, p_2) \in \R \oplus \R[\t]_{\leq d-1} \oplus \R[\t]_{\leq d-1}$. 
By Lemma~\ref{lem:InftyNodes}, $w_{\infty} $ equals $\det(M_d)/\det(M_{d-1})$. 
Then, by definition, 
\[
B_{x,y}(p,q) =\int \! p_1q_x (x-\t) - p_2q_y (\t-y)  d\mu + \ev_{\infty}(w_{\infty}(p_1+p_2)+p_0(q_x-q_y)) +p_0(x-y).\\
\]
Using the properties of $q_x, q_y$ given in Lemma~\ref{lem:kernel}, this simplifies to
\begin{align*}
B_{x,y}(p,q)& = -w_{\infty}\ev_{\infty}(p_1) -w_{\infty} \ev_{\infty}(p_2)  +\ev_{\infty}(w_{\infty}(p_1+p_2)+p_0(q_x-q_y)) +p_0(x-y)\\
& =  \ev_{\infty}(p_0(q_x-q_y)) +p_0(x-y),
\end{align*}
as claimed.
\end{proof}

\begin{lem}\label{lem:zard}
 Let $H\in\R[\x,\y]$ be a non-zero polynomial with the property that for every $y\in\R$ the polynomial 
 $H(\x,y)\in\R[\x]$ has distinct, real zeros. Then every polynomial vanishing on the real variety of $H$ must be a polynomial multiple of $H$.
\end{lem}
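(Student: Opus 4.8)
The plan is to identify the vanishing ideal of the real zero set $V_{\R}(H)=\{(x,y)\in\R^2 : H(x,y)=0\}$ with the principal ideal $(H)$, which is exactly the assertion. Passing to the complex zero set $V_{\C}(H)\subseteq\C^2$, the argument rests on two structural facts: that $H$ is square-free, and that $V_{\R}(H)$ is Zariski dense in $V_{\C}(H)$. Granting both, any $G\in\R[\x,\y]$ vanishing on $V_{\R}(H)$ vanishes on its Zariski closure $V_{\C}(H)$, so by Hilbert's Nullstellensatz $G$ lies in $\sqrt{(H)}$; since $H$ is square-free, $\sqrt{(H)}=(H)$, and therefore $H\mid G$, as required. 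So the whole proof reduces to verifying square-freeness and density.

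First I would establish square-freeness for the factors that matter. The discriminant $\operatorname{disc}_{\x}(H)\in\R[\y]$ is nonzero: for every real $y$ the univariate polynomial $H(\x,y)$ has only simple roots, so the discriminant does not vanish at any real $y$ and hence is a nonzero polynomial. If $H$ had a repeated irreducible factor $P$ with $\deg_{\x}P\geq1$, then $P$ would divide $\gcd(H,\partial H/\partial\x)$, and for all but finitely many $y$ this would force a common root of $H(\x,y)$ and $\partial H/\partial\x(\x,y)$, i.e.\ a repeated root of $H(\x,y)$, contradicting simplicity. Thus no irreducible factor of positive $\x$-degree can be repeated.

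Next I would prove Zariski density of the real points. Factoring $H=\prod_j P_j$ into pairwise non-associate irreducibles in $\R[\x,\y]$, for generic $y\in\R$ one has $\deg_{\x}H(\x,y)=\sum_j\deg_{\x}P_j(\x,y)$; since all $\deg_{\x}H(\x,y)$ roots are real and simple, each factor $P_j(\x,y)$ must contribute exactly $\deg_{\x}P_j(\x,y)$ distinct real roots. Hence every $P_j$ with $\deg_{\x}P_j\geq1$ acquires a real root over generic $y$, so $V_{\R}(P_j)$ is infinite, thus one-dimensional, and therefore contains a nonsingular real point. The standard fact that an irreducible real variety with a nonsingular real point has Zariski-dense real points then shows $V_{\R}(P_j)$ is dense in $V_{\C}(P_j)$, and taking the union over $j$ gives density of $V_{\R}(H)$ in $V_{\C}(H)$.

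The main obstacle is making sure that \emph{every} irreducible component of $V_{\C}(H)$ carries a Zariski-dense set of real points, equivalently that $H$ has no factor lying in $\R[\y]$: such a factor is invisible to the $\x$-root count yet would contribute a complex component with no real points and, if repeated, even break square-freeness. The delicate point is therefore to exclude $\R[\y]$-factors using the full strength of the hypothesis, namely that $H(\x,y)$ is a genuine nonzero univariate polynomial with simple real zeros for \emph{every} real $y$; a factor in $\R[\y]$ with a real zero $y_0$ would make $H(\x,y_0)$ identically zero, and one must argue that the remaining $\R[\y]$-factors without real zeros cannot occur (or are ruled out by primitivity of $H$ in $\x$ over $\R[\y]$). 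Once this is secured, the square-freeness and density steps combine with the Nullstellensatz argument of the first paragraph to finish the proof.
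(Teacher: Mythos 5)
Your strategy is essentially the one the paper uses: factor $H$, show that the real points of each factor are Zariski dense in its complex variety, and finish with Hilbert's Nullstellensatz. Your positive steps are sound, though somewhat heavier than the paper's: the discriminant argument for square-freeness and the generic-fiber root count showing each factor of positive $\x$-degree has infinitely many real zeros are fine, and your route to density (infinite real variety $\Rightarrow$ nonsingular real point $\Rightarrow$ dense real points) works, whereas the paper gets density more cheaply by noting that an infinite real variety inside an irreducible complex plane curve has Zariski closure of dimension at least one and hence equals the curve.

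However, the step you explicitly flag and leave open --- excluding irreducible factors of $H$ lying in $\R[\y]$ with no real zeros --- is a genuine gap, and it cannot be closed, because the lemma as literally stated is false in exactly that case. Take $H=(\y^2+1)\,\x$: for every $y\in\R$ the polynomial $H(\x,y)=(y^2+1)\x$ has the single, simple, real zero $x=0$, so the hypothesis holds; yet $V_\R(H)$ is the $\y$-axis, the polynomial $\x$ vanishes on it, and $\x$ is not a polynomial multiple of $(\y^2+1)\x$. Neither of your proposed escapes is available: such factors \emph{can} occur under the stated hypothesis, and primitivity of $H$ in $\x$ over $\R[\y]$ is not among the hypotheses. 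What you have actually uncovered is an implicit assumption that the paper's own proof also makes silently: its conjugate-pair argument (``$H_i$ and $\overline{H_i}$ have the same real roots along $\y=y$, contradicting distinctness'') and its claim that the real variety of each irreducible factor is infinite are both vacuous for factors constant in $\x$. The lemma is nevertheless applied correctly in the paper, since $H=(\x-\y)F$ has no factor in $\R[\y]$: such a factor $P$ would force $F(\x,y_0)\equiv 0$ at some root $y_0$ of $P$; as the coefficient of $\x^d$ in $F$ is $F_{\infty}(\y)$, whose roots are all real, $y_0$ would be real with $F(y_0,y_0)=0$, contradicting the positive definiteness of $M(y_0,y_0)$ in Remark~\ref{rem:PDdiag}. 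So your proof is incomplete as written, but the missing step is not one you could have supplied; the correct repair is to add to the lemma the (harmless for the application) hypothesis that no irreducible factor of $H$ lies in $\R[\y]$, after which your argument, like the paper's, goes through.
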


\begin{proof}
 Let $H=H_1\cdots H_r$ where each $H_i$ is irreducible in $\C[\x,\y]$. 
By our assumption on distinct zeros, the $H_i$ are pairwise coprime. 
Moreover, each factor $H_i$ belongs to $\R[\x,\y]$. If not, then since $H\in \R[\x,\y]$, 
both $H_i$ and its complex conjugate $\overline{H_i}$ appear as factors of $H$
and have the same real roots along the line $\y = y$ for $y\in \R$, contradicting the distinctness of these roots.
Thus it suffices to show that a polynomial vanishing on the real variety of $H$ must be a polynomial multiple of each $H_i$. Since each $H_i$ satisfies our assumption as well, we can assume without loss of generality that $H$ is irreducible itself.
 
 The real variety of $H$ contains infinitely many points so its Zariski closure in $\C^2$ 
 is at least one dimensional. By irreducibility of $H$, its real variety Zariski-dense in its complex variety. 
 Now the claim follows from Hilbert's Nullstellensatz.
\end{proof}

Now we are ready to prove Theorem~\ref{thm:main}(b).

\begin{proof}[Proof of Theorem~\ref{thm:main}(b)] 
Let $F$ be the polynomial given by Theorem~\ref{thm:main}(a).
Then $F$ has degree $2d$ in $\x$ and $\y$, with top degree part $\det(M_{d-1})\x^d\y^d$, 
which is non-zero by the non-degenerateness of $\mu$. 
Then $(\x-\y)F$ has degree $2d+1$ with top degree part equal to $\det(M_{d-1})(\x-\y)\x^d\y^d$.
Also, for every $y\in \R$, the polynomial  $(\x-y)F(\x,y)\in \R[\x]$ has distinct, real roots. 
Lemma \ref{lem:zard} then implies that any polynomial $G\in \R[\x,\y]$ vanishing on the real variety of $(\x-\y)F$ 
must be a polynomial multiple of it. 

We further claim that 
the points $(x,y)$ in $V_\R((\x-\y)F)$ with $F_{\infty}(x)\neq0$ and 
 $F_{\infty}(y)\neq0$ are Zariski-dense in $V_\R((\x-\y)F)$.  
 That is, any polynomial $G\in \R[\x,\y]$ vanishing on 
 $V_\R((\x-\y)F)\backslash V_\R(F_{\infty}(\x)F_{\infty}(\y))$
 also vanishes on $V_\R((\x-\y)F)$ and is therefore a multiple of $(\x-\y)F$.
 It suffices to show that $(\x-\y)F$ has no factors in common with $F_{\infty}(\x)F_{\infty}(\y)$. 
 The factors of $F_{\infty}(\x)F_{\infty}(\y)$ are given by Lemma~\ref{lem:InftyNodes}.
 Suppose for the sake of contradiction that $(\x-s_i)$ divides $(\x-\y)F$ for some $i=1,\hdots, d$. 
It must be that $(\x-s_i)$ divides $F$.  This implies that 
 $F(s_i, s_i)$ equals zero, which contradicts the observation in Remark~\ref{rem:PDdiag} that the matrix $M(s_i,s_i)$ 
 is positive definite and $F(s_i,s_i)= \det(M(s_i,s_i))>0$.

Let $G$ denote the determinant of the $(2d+1)\times (2d+1)$ matrix representing the bilinear form $B_{\x,\y}$.
We will show that $(\x-\y)F = c\cdot G$ where $c = (-1)^d \det(M_d)/\det(M_{d-1})^2$.
Note that $c \cdot G$ is also a polynomial of degree $2d+1$. Inspection shows that its top degree part 
is  $\det(M_{d-1})(\x-\y)\x^d\y^d$.  Thus by the above argument, it suffices to show that $G(x,y)=0$ for all 
$(x,y)\in V_\R((\x-\y)F)$ with $F_{\infty}(x)F_{\infty}(y)\neq 0$. 

Now take $x, y\in \R$ with $(x-y)F(x,y)=0$ and $F_{\infty}(x)F_{\infty}(y)\neq 0$.
Let $q_x, q_y\in \R[\t]_{\leq d-1}$ be the polynomials given by Lemma~\ref{lem:kernel} and 
let $q = (w_{\infty}, q_x, -q_y)$. We claim that $q$ is in the kernel of $B_{x,y}$. To see this, let 
$p = (p_0, p_1, p_2)\in \R\oplus \R[\t]_{\leq d-1} \oplus \R[\t]_{\leq d-1}$. Then 
\[B_{x,y}(p,q)  \ \ = \ \ p_0\cdot \ev_{\infty}(q_x-q_y) +p_0 \cdot (x-y),\]
by Lemma~\ref{lem:q}. If $x=y$, this is clearly zero. Otherwise $F(x,y)=0$ and 
by Theorem~\ref{thm:main}(a) there exists $r_3, \hdots, r_{d+1}\in \R \cup \{\infty\}$
and a quadrature rule of degree $2d$ for $\mu$ with nodes $x,y,r_3, \hdots,r_{d+1}$. 
Since $F_{\infty}(x)\neq 0$, each $r_i\in \R$. 
Then 
\[
q_x  \  = \ \prod_{i=1}^{d}(\t-s_i) - (\t-y)\prod_{j=3}^{d+1}(\t-r_j)
\ \ \text{ and } \ \ 
q_y  \  =  \  \prod_{i=1}^{d}(\t-s_i) - (\t-x)\prod_{j=3}^{d+1}(\t-r_j).
\]
Expanding and looking at the coefficient of $\t^{d-1}$ reveals that 
\[
 \ev_{\infty}(q_x) = -\sum_{i=1}^d s_i + y+ \sum_{j=3}^{d+1} r_j 
 \ \ \text{ and } \ \ 
 \ev_{\infty}(q_y) = -\sum_{i=1}^d s_i + x+ \sum_{j=3}^{d+1} r_j . 
\]
In particular,  $\ev_{\infty}(q_x -q_y) = y-x$, giving $B_{x,y}(p,q)=0$.
Since the bilinear form $B_{x,y}$ has a non-zero kernel, the determinant, $G(x,y)$, 
of its representing matrix is zero. 
\end{proof}

\begin{remark}\label{rem:GenEigLinear}
Again this translates the problem of finding the roots of $F(\x,y)$ for fixed $y\in \R$ 
into a generalized eigenvalue problem. Consider the $(2d+1)\times (2d+1)$ symmetric matrix polynomial
\[
\mathcal{M}  \ = \  \mathcal{M}(\x, \y)  \ = \  \begin{pmatrix}
       \frac{\det M_{d-1}}{\det M_d} (\x-\y) &e_d^T&e_d^T\\
   e_d&\x M_{d-1}-M_{d-1}'&0\\
   e_d&0&-\y M_{d-1}+M_{d-1}'
      \end{pmatrix}.
\]
Since $M_{d-1}$ is positive definite and $\det(M_{d-1})/\det(M_d)$ is positive, the coefficient of $\x$ 
in $\mathcal{M}$ 
is positive semidefinite of rank $d+1$. In particular, for fixed $y\in \R$, we can solve $F(\x,y)=0$ by 
solving $0=\det(\mathcal{M}(\x,y)) = \det(\x A-B)$, where $A$ is positive semidefinite.  
\end{remark}

\begin{ex}[Normal Distribution, $d=3$] \label{ex:Normal2}
Consider again $d=3$ and the normal distribution given in Example~\ref{ex:Normal}. 
We calculate that $\det(M_3) = 12$ and $\det(M_2) = 2$. 
The degree $6$ polynomial $F$ given by Theorem~\ref{thm:main} 
satisfies $(\x-\y)F = -3\det(\mathcal{M})$ where 
\[
{\small
\mathcal{M}  \ = \  \begin{pmatrix}
\frac{\x-\y}{6} 	& 0 & 0 & 1& 0 & 0 & 1\\
 0		& \x & -1 & \x & 0 & 0 & 0 \\
 0		& -1 & \x & -3 & 0 & 0 & 0 \\
 1		& \x & -3 & 3 \x & 0 & 0 & 0 \\
 0		& 0 & 0 & 0 & -\y & 1 & -\y \\
 0		& 0 & 0 & 0 & 1 & -\y & 3 \\
 1		& 0 & 0 & 0 & -\y & 3 & -3 \y \\
\end{pmatrix}.
}
\]
Then $\mathcal{M}(\x,1)$ has the form $\x A - B$ where $A$ is a positive semidefinite matrix of rank four. 
The determinant $\det(\mathcal{M}(\x,1))$ has four roots, $\x\approx -2.15, -0.52, 1, 2.67$, which are the 
nodes of a quadrature rule for $\mu$ of degree $6$.  The curve $\det(\mathcal{M})=0$ along with the line $\y=1$ 
are shown in Figure~\ref{fig:Normal2}.
\hfill $\diamond$
\end{ex}

\begin{figure}
\begin{center} 
\includegraphics[height=3in]{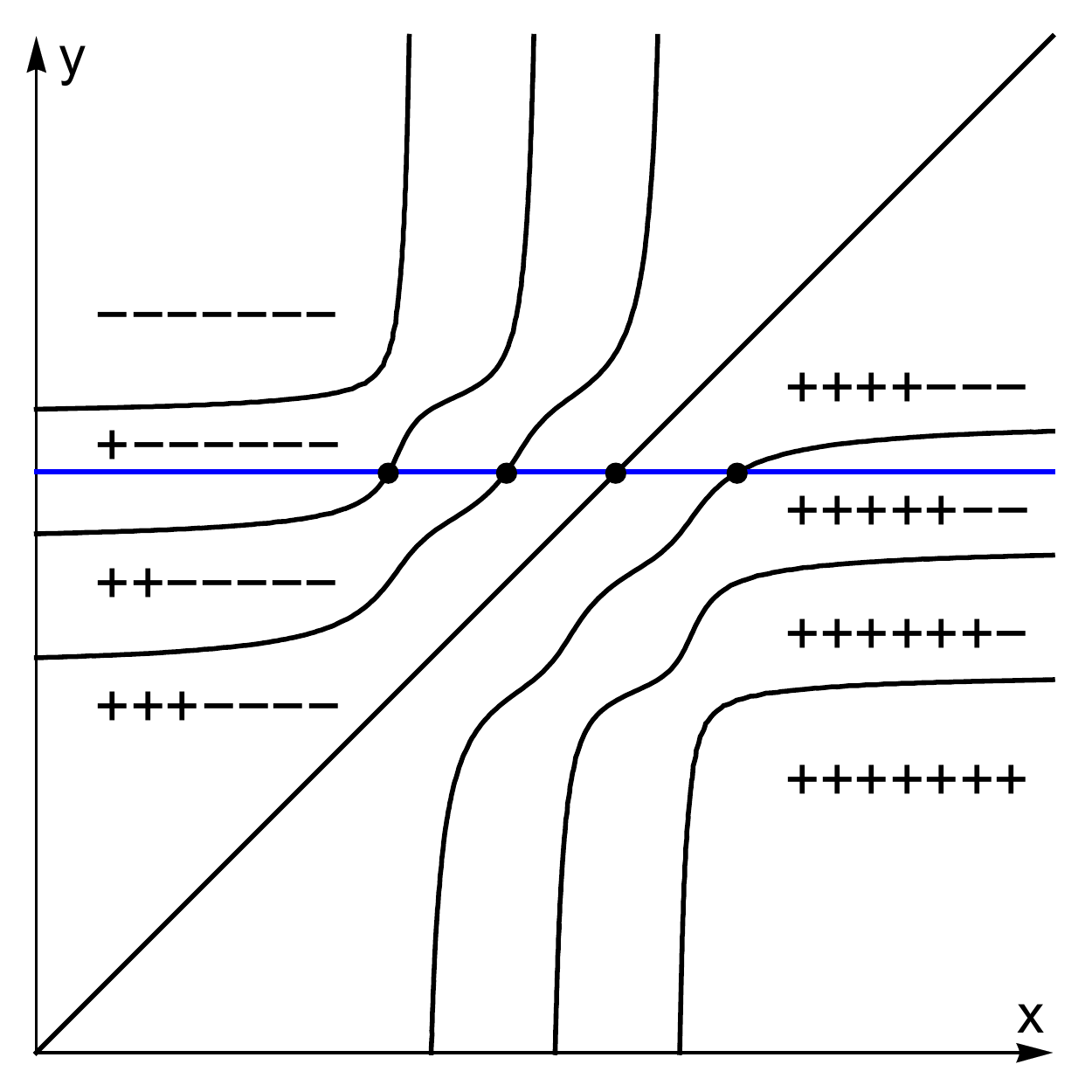}
\end{center}
\caption{\label{fig:Normal2} 
The curve given by $\det(\mathcal{M})=0$ and signs of eigenvalues of the $7\times 7$ matrix 
$\mathcal{M}$
 from Example~\ref{ex:Normal2}.
}
\end{figure}

\begin{rem}
Note that the matrix $\mathcal{M}(\x,-\y)$ has the form $\x A + \y B + C$ where the matrices $A,B$ are positive semidefinite.  For any $a_1, a_2 \in \R_{>0}$ and $b_1,b_2\in \R$, the matrix coefficient of $\t$ in 
$\mathcal{M}(a_1\t + b_1,-a_2\t + b_2)$ is positive definite, implying that 
the polynomial $F(a_1\t + b_1,-a_2\t + b_2)\in \R[\t]$ is real-rooted.  
One can see this in Figures \ref{fig:Normal} and \ref{fig:Normal2}, as 
 any line with negative slope intersects the curve $V(F)$ in six real points. 
It also shows the polynomial $F(\x,-\y)$ is \textit{real stable} \cite[Prop.~2.4]{wag}.
The Helton-Vinnikov theorem \cite[Thm. 2.2]{hv} then implies that
not only $(\x-\y)\cdot F$ has a $(2d+1)\times(2d+1)$ linear determinantal representation like in the Theorem \ref{thm:main}(b)
but also $F$ itself has a determinantal representation
$F = \det(\x A + \y B + C)$, where $A,B,C$ are $2d\times2d$ real symmetric matrices 
and $A$ and $-B$ are positive semidefinite. 
However it is unclear if there exists such a representation for which the entries 
of $A,B,C$ are easily calculated from the moments $m_{k}$ of $\mu$, as with $\mathcal{M}$.
\end{rem}

\section{Univariate quadrature rules with more nodes}\label{sec:Generalizations}

It is natural to try to generalize the above discussion to situations where more nodes of a quadrature rule are prescribed. Finding a quadrature rule means specifying an even number of real parameters since each node comes with a weight. We will now consider the following \textit{minimal  problem}:

\begin{problem} \label{prob:UnivariateGeneralization}
For integers $n,\ell \geq 1$,
 we are given $n+2\ell+1$ moments $m_0,\dots, m_{n+2\ell}$ of a (positive Borel) measure $\mu$ on the real line 
 that is non-degenerate in degree $n+2\ell$  
 and $n-1$ real numbers $x_1,\dots, x_{n-1}$. Does there exist a quadrature rule for $\mu$ of degree $n+2\ell$
 with $n+\ell$ nodes including  $x_1,\dots , x_{n-1}$?
\end{problem}

Specifying the quadrature rule requires $2(\ell+1)+n-1=n+2\ell+1$ parameters, as we have two parameters for each of $\ell+1$ unspecified nodes and one parameter for the weight of each of the $n-1$ specified nodes. Therefore the number of parameters that we have to choose is exactly equal to the number of moments that we need to match.

For $n=1$, the problem is solved by the well-known Gaussian quadrature. The case $n=2$ is the main focus of this paper, and it is classically known that such a measure exists. 
Unfortunately, for $n=3$ this can fail even if the measure is non-degenerate in every degree. To see this, we need some preparation.

For each integer $0 \leq k \leq n$, consider the quadratic form 
$p \mapsto \int \t^k  \cdot p^2 \ d  \mu$ on $\R[\t]_{\leq \ell}$, which, with respect to the basis 
$1,\t,\hdots, \t^{\ell}$, is represented by the 
$(\ell+1)\times (\ell+1)$ matrix
\[
M^{(k)}_{\ell} \ = \ (m_{i+j+k-2})_{1\leq i,j \leq \ell+1}.
\]
Notice that the highest moment of $\mu$ needed to specify these matrices is $m_{n+2\ell}$, 
achieved by $i = j = \ell+1$ and $k = n$. 

For $k\in\N_0$, we denote by $e_k(\x_1,\dots,\x_n) \in \R[\x_1, \hdots, \x_n]$ the
$k$-th elementary symmetric polynomial in $\x_1, \hdots, \x_n$, 
for which the following polynomial identity in $\R[\t]$
holds:
\[\prod_{i=1}^n(\t-\x_i) \ \ = \ \ \sum_{k=0}^n(-1)^ke_k(\x_1,\dots,\x_n)\t^{n-k}.\]

\begin{prop}\label{multiprop}
Let $\mu$ be a measure on $\R$ that is non-degenerate in degree $n+2\ell \geq 1$ for 
some  $n,\ell \in\N_0$ and let $x_1, \hdots, x_{n} \in \R$ be distinct. 
If there is a quadrature rule for $\mu$ of degree $n+2\ell$ 
with nodes $r_1 = x_1, \hdots, r_n = x_n, r_{n+1},\dots,r_{n+\ell}\in\R\cup\{\infty\}$ then
\[\det\left(\sum_{k=0}^n (-1)^k e_k(x_1,\dots,x_n) M^{(n-k)}_{\ell}\right)=0.\]
\end{prop}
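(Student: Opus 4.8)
The plan is to generalize the forward (``only if'') direction of the proof of Theorem~\ref{thm:main}(a): I recognize the matrix in the statement as the Gram matrix of an explicit symmetric bilinear form on $\R[\t]_{\leq \ell}$, and then exhibit a nonzero element of its kernel, which forces the determinant to vanish.

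First I would identify the matrix. Writing $P(\t) = \prod_{i=1}^n(\t-x_i) = \sum_{k=0}^n(-1)^k e_k(x_1,\dots,x_n)\,\t^{n-k}$ and recalling that $M^{(k)}_{\ell}$ represents the bilinear form $(p,q)\mapsto \int \t^k p\,q\,d\mu$ on $\R[\t]_{\leq \ell}$ with respect to the monomial basis $1,\t,\dots,\t^{\ell}$, linearity in $k$ shows that
\[
\sum_{k=0}^n(-1)^k e_k(x_1,\dots,x_n)\,M^{(n-k)}_{\ell}
\]
is exactly the $(\ell+1)\times(\ell+1)$ Gram matrix, in this basis, of the symmetric bilinear form $B(p,q)=\int P(\t)\,p\,q\,d\mu$ on $\R[\t]_{\leq \ell}$. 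Hence it suffices to prove that $B$ is degenerate.

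Next I would produce the kernel element, following the choice of $q$ used in the proof of Theorem~\ref{thm:main}(a). Let $q\in\R[\t]$ be the monic polynomial whose roots are precisely the finite nodes among $r_{n+1},\dots,r_{n+\ell}$; since there are at most $\ell$ of these, $\deg q\leq \ell$, so $0\neq q\in\R[\t]_{\leq \ell}$. For arbitrary $p\in\R[\t]_{\leq\ell}$ put $g=P\cdot p\cdot q$, which has degree at most $n+2\ell$, so that the degree-$(n+2\ell)$ quadrature rule computes $\int g\,d\mu$ exactly as $\sum_k w_k\,\ev_{r_k}(g)$. The factor $P$ vanishes at each of $r_1=x_1,\dots,r_n=x_n$, and the factor $q$ vanishes at every finite node among $r_{n+1},\dots,r_{n+\ell}$; thus $\ev_{r_k}(g)=0$ for every real node $r_k$. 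If one of the extra nodes equals $\infty$, then $q$ has degree at most $\ell-1$, so $\deg g\leq n+2\ell-1$ and $\ev^{n+2\ell}_{\infty}(g)=0$ as well. Therefore $B(p,q)=\int g\,d\mu=0$ for all $p$, so $q$ lies in the kernel of $B$ and the determinant in the statement vanishes.

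The only delicate point, and the sole place where a node at infinity enters, is the degree bookkeeping in the last step: one must verify that $g$ never exceeds degree $n+2\ell$ so that exactness applies, and that a possible node at $\infty$ contributes nothing because the top coefficient of $g$ is zero. Beyond this, the argument is a direct transcription of the case $n=2$ treated in Theorem~\ref{thm:main}(a); note that only exactness of the rule up to degree $n+2\ell$ is used, so the full strength of non-degeneracy is not needed for this implication.
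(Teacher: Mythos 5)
Your proposal is correct and follows essentially the same route as the paper: identify the matrix as the Gram matrix of the bilinear form $(p,q)\mapsto\int \prod_{i=1}^n(\t-x_i)\,p\,q\;d\mu$ on $\R[\t]_{\leq\ell}$, exhibit a kernel element $q$ vanishing at the unspecified nodes (with degree dropping to $\leq \ell-1$ when one of them is $\infty$), and conclude that the determinant vanishes. The only difference is cosmetic: the paper defines $q$ uniformly by the conditions $\ev_{r_i}(q)=0$ for $i=n+1,\dots,n+\ell$, whereas you construct it explicitly and carry out the degree bookkeeping for a possible node at infinity by hand, which the paper leaves implicit.
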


\begin{proof}
For $X = \{x_1, \hdots, x_{n}\} \subset \R$, 
consider the bilinear form $B_{X}$ on $\R[\t]_{\leq \ell}$ given by 
\[
B_X(p,q) \ \ = \ \  \int \ \prod_{k=1}^{n}(\t-x_k)  \cdot p\cdot q \ d\mu
\ \ = \ \ \sum_{k=0}^n (-1)^k e_k(x_1,\dots,x_n) \cdot \int \t^{n-k} \cdot p\cdot q \ d\mu.
\]
With respect to the basis $1,\t, \hdots, \t^{\ell}$, this is represented by the matrix
\begin{equation}\label{eq:MultiLinearMatrix}
\sum_{k=0}^n (-1)^k e_k(x_1,\dots,x_n) M^{(n-k)}_{\ell}.
\end{equation}
Suppose that $r_1 = x_1, \hdots, r_n = x_n, r_{n+1},\dots,r_{n+\ell}\in\R\cup\{\infty\}$ are the nodes of 
a quadrature rule of degree $n+2\ell$ for $\mu$.
As in the proof of Theorem~\ref{thm:main}(a), let $q$ be the unique (up to scaling) 
nonzero polynomial in $\R[\t]_{\leq \ell}$ 
for which $\ev_{r_i}(q)=0$ for each $i = n+1, \hdots, n+\ell$. Then for every $p\in \R[\t]_{\leq \ell}$, 
\[
B_X(p,q) \ \ = \ \  \int \ \prod_{k=1}^{n}(\t-x_k)  \cdot p\cdot q \ d\mu \ \ = \ \ 0.
\]
Therefore the bilinear form $B_X$ has a nonzero kernel and the determinant of 
its representing matrix \eqref{eq:MultiLinearMatrix} is zero. 
\end{proof}

Unfortunately, the converse of Proposition~\ref{multiprop} does not hold for $n>2$. 

\begin{ex}
Consider the measure given by the exponential distribution on $\R$,  
whose $k$-th moment is $m_k = k!$ for every $k\in \N_0$. 
Let $n = 3$, $\ell=3$ so that $n+2\ell=9$. 
Then in Problem~\ref{prob:UnivariateGeneralization},
we want to build a quadrature rule for $\mu$ of degree $9$ 
with at most $ n+\ell=6$ nodes including $n-1=2$ specified nodes $x_1, x_2 \in \R$. 
However for $x_1=\frac13$ and $x_2=11$, 
the determinant of the $4\times 4$ matrix in Proposition \ref{multiprop} equals (up to a positive constant)
\[137503\x_3^4 -1695024\x_3^3+11282760\x_3^2-41197920\x_3+46998216 \  \in \ \R[\x_3],\]
which has complex roots $\x_3 \approx 1.87, 5.20, 2.63 \pm i \ 5.31$, only two of which are real.  
Since $\mu$ is non-degenerate in every degree, 
any quadrature rule for $\mu$ in degree $9$ has at $\geq 5$ nodes. 
However, by Proposition \ref{multiprop} there are only two
possibilities for the remaining  $\geq 3$ nodes. 
Therefore there is no quadrature rule for $\mu$ 
of degree $9$ with $\leq 6$ nodes including $x_1=\frac13$ and $x_2=11$.
\end{ex}

\end{document}